\documentclass[11pt]{article}
\usepackage{amsmath,amssymb,amsthm,amscd,dsfont}
\usepackage{amsfonts,latexsym,rawfonts,amsmath,amssymb,amsthm}
\usepackage{lscape}
\usepackage{amscd, float,times,rotating}
\usepackage{pb-diagram}
\usepackage{hyperref}
\numberwithin{equation}{section}

\newtheorem{prop}{Proposition}[section]
\newtheorem{theorem}[prop]{Theorem}
\newtheorem{lemma}[prop]{Lemma}
\newtheorem{corollary}[prop]{Corollary}
\newtheorem{remark}[prop]{Remark}
\newtheorem{example}[prop]{Example}
\newtheorem{definition}[prop]{Definition}

\begin{document}
\title {Willmore-type inequality for closed hypersurfaces in complete manifolds with Ricci curvature bounded  below}
\author{Xiaoshang Jin   \thanks{X. J. is supported by the NSFC (Grant No. 12201225).}
\ \ \ \ \ \ Jiabin Yin	\thanks{J. Y. is supported by the NSFC (Grant No. 12201138) and Mathematics Tianyuan fund project (Grant No. 12226350).}}

\date{}
\maketitle
\begin{abstract}
In this paper, we establish a Willmore-type inequality for closed hypersurfaces in a complete Riemannian manifold of dimension $n+1$ with ${\rm Ric}\geq-ng$. It extends the classic result of Argostianiani, Fogagnolo and Mazzieri in \cite{agostiniani2020sharp} to the Riemannian manifold of negative curvature. As an application, we construct a Willmore-type inequality for closed hypersurfaces in hyperbolic space and obtain the characterization of geodesic sphere.
\end{abstract}

\section{Introduction}  \label{sect1}
The classical Willmore inequality \cite{willmore1968mean} for a bounded domain $\Omega$ of $\mathbb R^3$ with smooth boundary
says that
\begin{equation*}
\int_{\partial\Omega}H^2\geq16\pi,
\end{equation*}
where $H$ is mean curvature of $\partial\Omega$.  Such an inequality has been extended in \cite{chen1971theorem} to submanifolds of any co-dimension in $\mathbb R^n$ for $n\geq3$. In particular, for a bounded domain $\Omega$ in $\mathbb R^n$ with
smooth boundary there holds
\begin{equation*}
\int_{\partial\Omega}|\frac{H}{n-1}|^{n-1}\geq|\mathbb S^{n-1}|,
\end{equation*}
with equality attained if and only if $\Omega$ is a ball.

In 2020, Agostiniani-Fogagnolo-Mazzieri \cite{agostiniani2020sharp} used a powerful and elegant
approach based on nonlinear potential theory  to  establish Willmore equality  for a bounded domain $\Omega$ with smooth boundary in complete Riemannian manifolds with non-negative Ricci curvature
\begin{equation}\label{Willmore}
\int_{\partial\Omega}|\frac{H}{n-1}|^{n-1}\geq{\rm AVR}(g)|\mathbb S^{n-1}|,
\end{equation}
where {\rm AVR}(g) is  asymptotic volume ratio defined by
$$
 {\rm AVR}(g)=\lim\limits_{r\rightarrow\infty}\frac{n\cdot {\rm Vol}\ B(p,r)}{r^n\cdot {\rm Vol}\ \mathbb S^{n-1}}.
$$
Whereafter, Wang \cite{wang2021remark}  provides a standard comparison methods in Riemannian geometry to prove Willmore equality \eqref{Willmore} which is very beautiful and concise.

Inspired by these previous results, we can inquire as to whether exist similar Willmore equality in complete Riemannian manifolds with negative Ricci curvature below bounded.

In this paper, we employ the methodology presented in \cite{wang2021remark} and offer a positive answer as following.  We need to present a few notions in order to effectively articulate our theorem.
%%%%%%%%%%%%%%%%%%%%%%%%%%%%%%%%%5
\begin{definition}
Suppose that $(M,g)$ is an $n+1$-dimensional complete  Riemannian manifolds with bounded Ricci curvature from below:
$Ric\geq -ng.$
Let $\Omega$ be a bounded open subset of $M$ with smooth boundary, then we call
\begin{equation}\label{eqn:1.2}
  {\rm RV}(\Omega)=\lim\limits_{R\rightarrow+\infty} \frac{{\rm Vol}\{x\in M:d_g(x,\Omega)\leq R\}}{\omega_n\int_0^R \sinh^nsds}
\end{equation}
the relative volume of $\Omega.$ Here $\omega_n$ is the volume of the unit sphere $\mathbb{S}^n.$
\end{definition}

\begin{remark}\label{rem:1.2}
\begin{itemize}
\item (1)  ${\rm RV}(\Omega)$ is well defined (see Lemma \ref{lem:2.3}.)
\item (2)  The formula \eqref{eqn:1.2} can also be written as:
\begin{equation}\label{1.3}
\begin{aligned}
{\rm RV}(\Omega)&=\frac{n\cdot2^n}{\omega_n}\lim\limits_{R\rightarrow+\infty} \frac{{\rm Vol}\{x\in M:d_g(x,\Omega)\leq R\}}{e^{nR}}
\\ &=\frac{2^n}{\omega_n}\lim\limits_{R\rightarrow+\infty} \frac{{\rm Vol}\{x\in M:d_g(x,\Omega)= R\}}{e^{nR}}
\end{aligned}
\end{equation}
\item (3)  It is easy to see by definition that If $\Omega_1\subseteq \Omega_2,$ then ${\rm RV}(\Omega_1)\leq {\rm RV}(\Omega_2).$
\item (4) If $\Omega=B(p,r)$ is a geodesic ball, then
\begin{equation}
  {\rm RV}(B(p,r))=e^{nr}\cdot {\rm RV}(p)
\end{equation}
where $${\rm RV}(p)=\lim\limits_{R\rightarrow+\infty} \frac{{\rm Vol}\ B(p,R) }{\omega_n\int_0^R \sinh^nsds}$$ is well-defined by Bishop-Gromov volume comparison theorem.
\end{itemize}
\end{remark}

Here is the first main result in this paper:
\begin{theorem}\label{thm:1.3}
Let $(M,g)$ be a complete noncompact Riemannian manifold of $n+1$-dimension $(n\geq 2)$ with $Ric\geq -ng$ and $\Omega\subseteq M$ is a bounded open subset with smooth boundary $\partial\Omega.$ Then
\begin{equation}\label{1.5}
  \int_{\partial\Omega\bigcap\{H\geq-n\}}(1+\frac{H}{n})^nd\sigma\geq {\rm RV}(\Omega)\cdot\omega_n.
\end{equation}
where $H$ is the mean curvature of $\partial\Omega$ with respect to the outer normal. Moreover, if $H>-n$ and $K=(\frac{{\rm RV}(\Omega)\cdot\omega_n}{V(\partial\Omega)})^{\frac{1}{n}}-1>-1,$ then the equality holds if and only if one of the following cases occurs:
\begin{itemize}
  \item (1) If $H(p)>n$ for some $p\in\partial\Omega,$ then $\partial\Omega$ is connected, $H_{\partial\Omega}\equiv nK>n$ and $M\setminus\Omega$ is isometric to
    $$([{\rm arccoth}\ K,+\infty)\times \partial\Omega,dr^2+(K^2-1)\sinh^2r g_{\partial\Omega}).$$
    \item (2) If $H(p)=n$ for some $p\in\partial\Omega,$ then $\partial\Omega$ is connected, $H_{\partial\Omega}\equiv nK=n$ and $M\setminus\Omega$ is isometric to
        $$([1,+\infty)\times \partial\Omega,dr^2+e^{2r-2}g_{\partial\Omega}).$$
    \item (3) If $H(p)\in(-n,n)$ for some $p\in\partial\Omega,$ then $H_{\partial\Omega}\in(-n,n),$ $\partial\Omega=\bigcup\limits_{j=1}^{N}\Sigma_j$ where $\Sigma_j$ is the connected component of $\partial\Omega$
    and $N$ is the number of ends of $M$ while there exists some constants $s_1,s_2,\cdots,s_{N}$ such that
    $M\setminus\Omega$ is isometric to
    $$\bigcup\limits_{j=1}^{N}([s_j,+\infty)\times \Sigma_j, dr^2+\frac{\cosh^2r}{\cosh^2s_j}g_{\Sigma_j})$$
   In special, if $\partial\Omega$ is connected, then $H_{\partial\Omega}\equiv nK\in(-n,n)$ and  $M\setminus\Omega$ is isometric to
    $$([{\rm arctanh}\ K,+\infty)\times \partial\Omega,dr^2+(1-K^2)\cosh^2r g_{\partial\Omega}).$$

\end{itemize}
\end{theorem}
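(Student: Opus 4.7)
My plan is to adapt the comparison-geometry argument of Wang \cite{wang2021remark} from the Ricci-nonnegative setting to the bounded-below regime $\mathrm{Ric}\ge -ng$, with hyperbolic functions replacing the affine Euclidean models. Let $\rho(x)=d_g(x,\Omega)$; this is Lipschitz on $M$ and smooth off the outer cut locus $\mathrm{Cut}(\partial\Omega)$. On its smooth locus the outward flow $\Phi_t$ of $\nabla\rho$ foliates $M\setminus\overline{\Omega}$ (up to a null set) by the level hypersurfaces $\Sigma_t=\{\rho=t\}$, with second fundamental form $A_t$ and mean curvature $H_t$ measured with respect to $\nabla\rho$. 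The Riccati identity
\[
\partial_t H_t = -|A_t|^2 - \mathrm{Ric}(\nabla\rho,\nabla\rho),
\]
the Cauchy--Schwarz bound $|A_t|^2\ge H_t^2/n$, and the Ricci assumption together reduce to the scalar differential inequality $\partial_t H_t\le -H_t^2/n+n$.

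I would next perform an ODE comparison. For $x\in\partial\Omega$ with $H_0(x)\ge -n$, the model equation $u'=-u^2/n+n$ with $u(0)=H_0(x)$ has a global solution $\bar H_t(x)$ equal to $n\coth(t+c_1)$, $n$, or $n\tanh(t+c_2)$ according as $H_0(x)>n$, $=n$, or $\in[-n,n)$, where $c_1=\mathrm{arccoth}(H_0(x)/n)$ and $c_2=\mathrm{arctanh}(H_0(x)/n)$. Scalar comparison then gives $H_t\le\bar H_t$ along the smooth part of the flow, and integrating $(\log J)'=H_t$ for the area Jacobian $J(t,x)=d\sigma_t/d\sigma_0$ yields $J(t,x)\le\bar J(t,x)$, equal in the three cases to $\sinh^n(t+c_1)/\sinh^n c_1$, $e^{nt}$, or $\cosh^n(t+c_2)/\cosh^n c_2$. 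A short calculation based on $e^c=\cosh c+\sinh c$ unifies the three regimes via the asymptotic
\[
\lim_{t\to\infty}\bar J(t,x)\,e^{-nt} = \left(\frac{1+H_0(x)/n}{2}\right)^n.
\]

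The main inequality then follows from a coarea volume estimate. Combining the coarea formula with $J\le\bar J$ (extending $\bar J\equiv 0$ after the finite-time model blow-up when $H_0<-n$) gives
\[
\mathrm{Vol}\{d_g(\cdot,\Omega)\le R\} \le \mathrm{Vol}(\Omega) + \int_{\partial\Omega}\!\int_0^R \bar J(t,x)\,dt\,d\sigma_0(x).
\]
The identity $\sinh(t+c)/\sinh t=\cosh c+\coth t\sinh c$ (and its $\cosh$ analogue) shows $\bar J(t,x)/\sinh^n t$ is strictly decreasing in $t$, so by the elementary lemma ``if $f/g$ is decreasing then so is $\int f/\int g$'' the ratio $\int_0^R\bar J(t,x)\,dt/(\omega_n\int_0^R\sinh^n s\,ds)$ is non-increasing in $R$; for $R\ge 1$ it is bounded uniformly on the compact set $\partial\Omega$, so dominated convergence exchanges the limit $R\to\infty$ with the $\partial\Omega$ integral. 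L'H\^opital identifies the pointwise limit as $(1+H_0(x)/n)^n/\omega_n$ on $\{H_0\ge -n\}$ and as $0$ elsewhere, giving exactly \eqref{1.5}.

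For the rigidity, equality in \eqref{1.5} forces equality in the Riccati comparison throughout the flow, which requires (i) every level $\Sigma_t$ to be umbilical, $A_t=(H_t/n)g_{\Sigma_t}$; (ii) $\mathrm{Ric}(\nabla\rho,\nabla\rho)\equiv -n$; and (iii) the outward normal exponential map to have empty cut locus, so $M\setminus\Omega\cong\partial\Omega\times[0,\infty)$. From (i) the Codazzi equation forces $H$ to be constant on each connected component $\Sigma_j$ of $\partial\Omega$, and (ii) together with the umbilical splitting identifies the corresponding end as a warped product $dr^2+f_j(r)^2 g_{\Sigma_j}$ with $f_j''=f_j$ and $f_j'(s_j)/f_j(s_j)=H_j/n$. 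Solving the ODE yields the three stated models: a $\sinh$-warp for $H_j>n$ (with $s_j=\mathrm{arccoth}(H_j/n)$), the horospherical $e^r$-warp for $H_j=n$, and a $\cosh$-warp for $|H_j|<n$ (with $s_j=\mathrm{arctanh}(H_j/n)$). The principal technical hurdles I foresee are (a) carefully handling the cut locus, both in the volume estimate above and in deducing its emptiness from equality, and (b) in cases (1) and (2) showing that no other component of $\partial\Omega$ can coexist with one on which $H\ge n$, which presumably exploits the rigid $\sinh$-/$e^r$-end structure together with connectedness and noncompactness of $M$.
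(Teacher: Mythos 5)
Your strategy coincides with the paper's: Wang's Riccati--Jacobian comparison adapted to $\mathrm{Ric}\ge -ng$ via the model equation $u'=-u^2/n+n$, the area bound $J(t,x)\le(\cosh t+\tfrac{H_0(x)}{n}\sinh t)^n$ (your three hyperbolic expressions are exactly this quantity in the three regimes), the lower-order treatment of $\{H_0<-n\}$ through finite focal time, and the passage to the limit by dominated convergence; your normalization by $\omega_n\int_0^R\sinh^n s\,ds$ with the monotone-ratio lemma is a cosmetic variant of the paper's normalization by $e^{nR}/(n\cdot 2^n)$. The equality analysis --- full Jacobian rigidity on a cut-locus-free tube, umbilicity, $\mathrm{Ric}(\nabla\rho,\nabla\rho)=-n$, Codazzi forcing $H$ locally constant, and the warped-product ODE $f''=f$ with $f'/f=H/n$ at the boundary --- is also exactly the paper's.

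The one genuine gap is the point you defer in your item (b): connectedness of $\partial\Omega$ in cases (1) and (2). This cannot come from the rigid end structure alone, because equality is achieved component by component (each component $\Sigma_j$ contributes $(1+H_j/n)^n\mathrm{Vol}(\Sigma_j)$ to the left side and exactly the matching end-volume growth to the right side, as case (3) shows), so nothing in the comparison argument forbids a second boundary component coexisting with one on which $H\ge n$. The paper closes this with an external input: Theorem~3 of Cai--Galloway \cite{cai1999boundaries}, applied to the compact separating slices $\{k\}\times\Sigma$ with mean curvature $\ge n$ and $d(p,\{k\}\times\Sigma)\to\infty$, which forces $M$ to have one end unless $(M,g)$ splits as $(\mathbb{R}\times\Sigma,\,dr^2+e^{2r}g_0)$. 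In case (1) the splitting alternative is excluded because the end over $\Sigma$ is a $\sinh$-warp rather than $e^{r}$; in case (2) it is excluded only by invoking the hypothesis $H_{\partial\Omega}>-n$, since the split model would produce a second boundary component of mean curvature exactly $-n$ (this is precisely the cusp phenomenon of the paper's Example~5.1 and the reason the theorem carries the $H>-n$ assumption). Alternatively, case (1) can be handled by a Kasue-type inradius bound (a compact hypersurface with $H>n$ with respect to one normal bounds a region of inradius at most $\mathrm{arccoth}(H/n)$ on that side, hence compact, so no other unbounded end fits there), but the borderline case $H=n$ still requires the splitting theorem. Your proposal as written does not supply either ingredient.
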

%%%%%%%%%%%%%%%%%%%%%%%%%%
\begin{remark}
\begin{itemize}
\item [(1)]By Theorem C in \cite{kasue1983ricci} or Proposition 2 in \cite{cai1999boundaries}, we know that if $M\setminus\Omega$ is noncompact, then $\partial\Omega\cap \{H\geq -n\}\not=\emptyset$.
\item [(2)] We provide example 5.1 and example 5.2 to illustrate why we need to add condition '$H>-n$' to characterize the equality of (\ref{1.5}).
\end{itemize}
\end{remark}
We shall demonstrate how the definition of asymptotically hyperbolic manifold and the rigidity result are connected.
Let's review some conceptions of AH manifolds.

Let $\overline{M}$ be a compact smooth   manifold of dimension $n+1$ with non-empty
boundary $\partial M$ of dimension $n$ and $M$ be its interior. We say that a complete metric $g$ on $M$ is (smoothly)
conformally compact if there exits a defining function $\rho$ on $M$ such that the conformally
equivalent metric $\bar{g} = \rho^2g$  can extend to a smooth Riemannian metric on $\overline{M}.$
The defining function is smooth on $\overline{M}$ and satisfies
 \begin{equation}
    \rho>0\ \ {\rm in} \ M,\ \ \ \ \ \ \ \ \rho=0\ \ {\rm on}\ \partial M,\ \ \ \ \ \ \ d\rho\neq 0\ \ {\rm on}\  \partial M.
 \end{equation}
\par Suppose that $(M,g)$ is a conformally compact Riemannian manifold and $\bar{g}=\rho^2g$ is a compactification, we call the induced metric $\hat{g}=\bar{g}|_{\partial M}$ the boundary metric associated to the compactification $\bar{g}.$ Then $(M,g)$ carries a well-defined conformal structure on the boundary $\partial M$  and we call $(\partial M,[\hat{g}])$ the conformal infinity of $g.$
 \par A conformally compact Riemannian manifold is said to be asymptotically hyperbolic (AH) if its sectional curvature goes to $-1$ when
approaching the boundary at infinity $(\rho\rightarrow 0).$
\\
\par Let's look back on the three cases in Theorem 1.3. if we set $\rho=e^{-r}$ for largely $r,$ then $\rho^2g=d\rho^2+f^2(\rho)g_\Omega$ where $\rho\in(0,c)$ for some $c\in(0,1)$ and
$$f(\rho)=(K^2-1)\frac{1-\rho^2}{2},\ \ \ \ \ \frac{1}{e^2},\ \ \ \ \ \frac{1+\rho^2}{2\cosh s_j}$$
in the three cases above. Then $f(\rho)$ could be extended to $\rho=0$ smoothly. Hence $M$ is conformally compact and $\rho$ is a geodesic defining function, i.e. $|d\rho|^2_{\rho^2g}=1$ near infinity. On the other hand,
a direct calculation indicates that $|K_g+1|=O(e^{-2r})=O(\rho^2).$  Hence we have the folllowing result:
\begin{corollary}
Let $(M,g)$ be a complete noncompact Riemannian manifold of $n+1$-dimension $(n\geq 2)$ with $Ric\geq -ng$ and $\Omega\subseteq M$ is a bounded open subset with smooth boundary $\partial\Omega.$ If the mean curvature $H>-n,$ and
$$ \int_{\partial\Omega}(1+\frac{H}{n})^nd\sigma= {\rm RV}(\Omega)\cdot\omega_n. $$
Then $M$ is an asymptotically hyperbolic manifold with conformal infinity $(\partial \Omega,[g_{\partial\Omega}]).$ Furthermore, if $H(p)\geq n$ for some $p\in\partial \Omega,$ then the conformal infinity is connected.
\end{corollary}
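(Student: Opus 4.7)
The plan is to derive this corollary by feeding the equality case into the rigidity part of Theorem \ref{thm:1.3} and then carrying out, end by end, the explicit conformal compactification already sketched in the paragraph preceding the statement. The first step is to confirm that the rigidity applies. Since $H>-n$ on $\partial\Omega$, the integrand $(1+H/n)^n$ is strictly positive, so the equality hypothesis gives
\begin{equation*}
{\rm RV}(\Omega)\,\omega_n=\int_{\partial\Omega}\bigl(1+\tfrac{H}{n}\bigr)^n\,d\sigma>0,
\end{equation*}
whence $K=({\rm RV}(\Omega)\omega_n/V(\partial\Omega))^{1/n}-1>-1$. Therefore $M\setminus\Omega$ is isometric to one of the three warped products displayed in Theorem \ref{thm:1.3}: on every end the metric has the form $g=dr^2+\phi(r)^2 g_\Sigma$, where $\phi(r)$ is a positive constant multiple of $\sinh r$, $e^r$, or $\cosh r$.

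Second, I would perform the conformal change $\rho=e^{-r}$ on each end. Then $dr=-\rho^{-1}d\rho$, so
\begin{equation*}
\rho^2 g=d\rho^2+\bigl(\rho\,\phi(r)\bigr)^2 g_\Sigma.
\end{equation*}
In each of the three cases $\rho\,\phi(r)$ is, up to a positive constant, one of $1-\rho^2$, $1$, or $1+\rho^2$, hence a smooth positive function of $\rho$ on $[0,\rho_0)$. Consequently $\rho^2 g$ extends smoothly across $\{\rho=0\}$, so $\rho$ is a geodesic defining function for a smooth conformal compactification of $M$. Since the boundary metric restricts on each component to a positive constant multiple of $g_\Sigma$, the conformal infinity is $(\partial\Omega,[g_{\partial\Omega}])$ in cases (1) and (2), and $(\bigsqcup_j \Sigma_j,[g_{\partial\Omega}])$ in case (3); in every case it is naturally identified with $(\partial\Omega,[g_{\partial\Omega}])$.

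Third, I would check the curvature decay. For a warped product $dr^2+\phi(r)^2 g_\Sigma$ with $\phi(r)=C e^r(1+O(e^{-2r}))$, the standard warped-product sectional curvature formulas yield all sectional curvatures equal to $-1+O(e^{-2r})=-1+O(\rho^2)$, which is the asymptotically hyperbolic condition. For the final assertion, observe that the hypothesis $H(p)\ge n$ for some $p\in\partial\Omega$ rules out case (3) of Theorem \ref{thm:1.3}, since that case forces $H\in(-n,n)$ strictly; hence we are in case (1) or (2), and in both $\partial\Omega$ is connected, so the conformal infinity is connected. The essential content of the proof lives entirely in the rigidity part of Theorem \ref{thm:1.3}; the remainder is a routine computation in warped-product geometry, and the only minor pitfall to watch for is, in case (3), making sure that the compactification is performed simultaneously on all $N$ ends with consistent choices of the shifts $s_j$ so that the boundary metric is globally well-defined within the single conformal class $[g_{\partial\Omega}]$.
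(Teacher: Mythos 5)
Your proposal is correct and follows essentially the same route as the paper: invoke the rigidity cases of Theorem \ref{thm:1.3}, compactify each end with the geodesic defining function $\rho=e^{-r}$ so that $\rho^2g=d\rho^2+f^2(\rho)g_{\partial\Omega}$ extends smoothly to $\rho=0$, verify $|K_g+1|=O(\rho^2)$, and note that $H(p)\ge n$ forces case (1) or (2), where $\partial\Omega$ is connected. Your explicit check that $H>-n$ forces ${\rm RV}(\Omega)>0$ and hence $K>-1$ (so the rigidity statement applies) is a small point the paper leaves implicit, but otherwise the arguments coincide.
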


\par As the second part of the paper, we consider Willmore-type inequality in Hyperbolic $\mathbb H^{n+1}$.
Recall that Chen \cite{chen1974some} proved that for any closed $2$-surface $\Sigma^2\subset\mathbb H^{n+1}$
Willmore-type equality holds
\begin{equation*}
\frac14\int_{\Sigma}H^2\geq 4\pi-|\Sigma|
\end{equation*}
with equality attained if and only if $\Sigma$ is a geodesic sphere. For other similar results, one can see \cite{chai2020willmore}\cite{wang2015singularities}.
In 2018, Hu \cite{hu2018willmore} proved that any star-shaped and mean-convex hypersurface $\Sigma\subset\mathbb H^n\, (n\geq3)$
satisfies
$$
\int_{\Sigma}(H^2-1)d\sigma\geq\omega_{n-1}^{\frac1{n-1}}|\Sigma|^{\frac{n-3}{n-1}}.
$$
Equality  holds if and only if $\Sigma$ is a geodesic sphere.

\par As an application of Theorem 1.3, we could provide a new Willmore-type inequality and it has very few requirements for surfaces. First, we define the inscribed radius as follows:
\begin{definition}
Let $\Omega$ be a bounded set in Riemannian manifold  $(M,g),$ we set
$$I(\Omega)=\sup\limits_{x\in \Omega}d_g(x,\partial\Omega)$$
to be the inscribed radius of $\Omega.$
\end{definition}
By the completeness of $\overline{\Omega},$ we can find some $p\in\Omega$ such that $B(p,I(\Omega))\subseteq \Omega.$
Then under the conditions of Theorem 1.3,
\begin{equation}\label{1.7}
  \begin{aligned}
  \int_{\partial\Omega\cap\{H\geq-n\}}(1+\frac{H}{n})^nd\sigma & \geq {\rm RV}(\Omega)\cdot\omega_n
  \\ &\geq {\rm RV}(B(p,I(\Omega))\cdot\omega_n
  \\ &=e^{nI(\Omega)}\cdot\omega_n\cdot {\rm RV}(p).
\end{aligned}
\end{equation}
for some $p\in \Omega.$
\par  Notice that ${\rm RV}(p)=1$ in hyperbolic space for any $p\in \mathbb H^{n+1},$ then we get the following Willmore-type inequality in hyperbolic space:
%%%%%%%%%%%%%%%%%%%%%%%%%%%%%%%%%%%%
\begin{theorem}\label{thm:1.7}
  Let $\Omega$ be a bounded open subset in the $n+1$-dimensional hyperbolic space $\mathbb{H}^{n+1}$ with smooth boundary $\partial\Omega.$
  Then
\begin{equation}\label{1.8}
 \int_{\partial\Omega\cap\{H\geq-n\}}(1+\frac{H}{n})^nd\sigma\geq e^{nI(\Omega)}\cdot\omega_n
 \end{equation}
  where $H$ is the mean curvature of $\partial\Omega$ with respect to outer normal. Moreover the equality holds if and only if $\Omega$ is a geodesic ball in $\mathbb{H}^{n+1}.$
\end{theorem}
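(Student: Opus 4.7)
My plan is to derive \eqref{1.8} directly from the chain \eqref{1.7}, and then read off the equality characterization by combining the rigidity of Theorem~\ref{thm:1.3} with the classification of compact totally umbilical hypersurfaces in hyperbolic space.

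Since $\mathrm{Ric}_{\mathbb{H}^{n+1}} \equiv -ng$, Theorem~\ref{thm:1.3} applies to any bounded $\Omega \subseteq \mathbb{H}^{n+1}$. A direct evaluation of the defining limit gives $\mathrm{RV}(p) = 1$ for every $p \in \mathbb{H}^{n+1}$, because $\mathrm{Vol}(B(p,R)) = \omega_n \int_0^R \sinh^n s\,ds$ exactly in $\mathbb{H}^{n+1}$. Combined with Remark~\ref{rem:1.2}(4), the chain \eqref{1.7} reduces to \eqref{1.8}. For the ``if'' direction of the equality, take $\Omega = B(p_0, r_0)$; then $I(\Omega) = r_0$, $H \equiv n\coth r_0 > n$ is constant, and $|\partial\Omega| = \omega_n \sinh^n r_0$, so
\begin{equation*}
\int_{\partial\Omega}\left(1+\tfrac{H}{n}\right)^n d\sigma = (1+\coth r_0)^n \omega_n \sinh^n r_0 = (\sinh r_0 + \cosh r_0)^n \omega_n = e^{nr_0}\omega_n,
\end{equation*}
matching the right-hand side of \eqref{1.8}.

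For the converse, suppose equality holds in \eqref{1.8}. Then equality propagates through every step of \eqref{1.7}; in particular equality is attained in \eqref{1.5} of Theorem~\ref{thm:1.3}. Once the hypothesis $H > -n$ on $\partial\Omega$ is verified (needed to deploy the rigidity clause of Theorem~\ref{thm:1.3}), the rigidity asserts that $\mathbb{H}^{n+1} \setminus \Omega$ is isometric to one of the three warped products listed there. In each of those models the boundary $\partial\Omega$ arises as the initial level set and therefore has shape operator proportional to the identity, so $\partial\Omega$ is a compact totally umbilical hypersurface with constant principal curvature equal to $K$, $1$, or $\tanh s_j$ respectively. The classical classification of totally umbilical hypersurfaces in $\mathbb{H}^{n+1}$ tells us that only geodesic spheres (with $\kappa > 1$) are compact, whereas horospheres ($\kappa = 1$) and equidistants to a totally geodesic $\mathbb{H}^n$ ($|\kappa| < 1$) are non-compact. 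Cases~(2) and~(3) of Theorem~\ref{thm:1.3} are thereby ruled out, Case~(1) holds, and $\partial\Omega$ is the geodesic sphere of radius $\mathrm{arccoth}\,K$. Since $\Omega$ is bounded, $\Omega$ must be the enclosed geodesic ball.

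The main obstacle I anticipate is justifying $H > -n$ on all of $\partial\Omega$ under the equality assumption so that Theorem~\ref{thm:1.3}'s rigidity clause can be invoked; a contact argument using the inscribed ball tangent at a maximizer of the distance function supplies one interior point where $H \le n\coth I(\Omega)$, and I expect the equality forces this control to spread to the whole boundary. The remaining step, the elimination of Cases~(2) and~(3) via the umbilical classification in $\mathbb{H}^{n+1}$, is then routine.
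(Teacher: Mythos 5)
Your derivation of the inequality \eqref{1.8} and the ``if'' computation for a geodesic ball are correct and match the paper. The converse, however, has a genuine gap. You propose to feed the equality in \eqref{1.5} into the rigidity clause of Theorem~\ref{thm:1.3}, but that clause is only stated under the hypothesis $H>-n$ on all of $\partial\Omega$, which is \emph{not} assumed in Theorem~\ref{thm:1.7} and which you do not establish. The ``contact argument'' you sketch only yields an \emph{upper} bound $H\le n\coth I(\Omega)$ at a single tangency point of the inscribed ball; it says nothing about the lower bound $H>-n$ at every point of $\partial\Omega$, and ``I expect the equality forces this control to spread'' is not an argument. Without $H>-n$ you cannot even conclude from Lemma~3.1 that $\tau=\infty$ on $\partial\Omega$ (Example~5.2 is precisely a warning that points with $H\le -n$ can create ``holes'' where $\tau<\infty$), so the warped-product structure and the umbilicity of $\partial\Omega$ that your classification step relies on are not available.

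The paper sidesteps this entirely by exploiting the \emph{other} inequality in the chain \eqref{1.7}: equality in \eqref{1.8} forces ${\rm RV}(\Omega)={\rm RV}\bigl(B(p,I(\Omega))\bigr)=e^{nI(\Omega)}$, and this single scalar identity is then unpacked at the conformal infinity of $\mathbb{H}^{n+1}$. Writing ${\rm RV}(E)=\frac{2^n}{\omega_n}{\rm Vol}(\mathbb S^n,\hat g^E)$ for the boundary metric of the compactification $e^{-2s_E}g$, the equality forces $(s_p-s_\Omega)\equiv I(\Omega)$ on $\mathbb S^n$, and a Busemann-type computation along geodesic rays from $p$ then shows every point of $\partial\Omega$ lies at distance exactly $I(\Omega)$ from $p$, i.e.\ $\Omega=B(p,I(\Omega))$. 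No curvature hypothesis on $\partial\Omega$ and no appeal to the rigidity of Theorem~\ref{thm:1.3} is needed. If you want to keep your route, you must first prove that equality in \eqref{1.8} forces $H>-n$ (equivalently, rule out the contribution of $\partial\Omega\cap\{H<-n\}$ and the boundary set $\{H=-n\}$); as it stands that step is missing and is exactly where the difficulty of the theorem lies.
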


\begin{remark}
\begin{itemize}
\item (1) Let $\Omega\subseteq\mathbb{H}^{n+1}$ be a bounded open set with smooth convex boundary, i.e. all the principal curvatures of $\partial\Omega$ are nonnegative everywhere.  Then the volume of level set can be accurately given by Steiner’s formula \cite{allendoerfer1948steiner}:
$$
 {\rm Vol}\{x\in M:d_g(x,\Omega)=t\}= \sum_{k=0}^{n}\sinh^{n-k} t\cosh^k t \int_{\partial\Omega}H_k d\sigma.
$$
where $H_k=\sum\limits_{i_1<i_2<\cdots<i_k}\kappa_{i_1}\kappa_{i_2}\cdots\kappa_{i_k}$ is the $k-$th order
mean curvature and $(\kappa_{1},\kappa_{2},\cdots,\kappa_{n})$ are the eigenvalues of the second fundamental form of $\partial\Omega.$ Then by (\ref{1.3}) and MacLaurin inequality,
$$
{\rm RV}(\Omega)\cdot\omega_n =\sum_{k=0}^{n}\int_{\partial\Omega}H_kd\sigma \leq \sum_{k=0}^{n}\int_{\partial\Omega} C_n^k (\frac{H}{n})^k d\sigma
=\int_{\partial\Omega}(1+\frac{H}{n})^nd\sigma.
$$

\item (2) Let $(\Omega, g)$ be an $n+1$-dimensional complete Riemannian manifold with
nonempty boundary $\partial \Omega$. If $Ric\geq-n$ and $H_{\partial\Omega}\geq nk>n$.  Then we have the estimate of the inscribed radius:
$$
I(\Omega)=\sup\limits_{x\in \Omega}d_g(x,\partial \Omega)\leq {\rm arccoth} k.
$$
The equality holds if and only if $(\Omega, g)$ is isometric
to a geodesic ball of radius ${\rm arccoth} k$ in hyperbolic space. One can see \cite{kasue1983ricci} or \cite{li2015rigidity} for more details.
\par The formula (\ref{1.8}) could also provide an estimate of the inscribed radius in the hyperbolic space.
\end{itemize}
\end{remark}

%%%%%%%%%%%%%%%%%%%%%%%%%%%%%%%%%%%%%%%%%%%%%%%
Here is the outline of this paper: In section 2, we will provide some basic analysis in Riemannian geometry and ODEs to prove that the relative volume is well-defined and to make preparations for the proof of Theorem 1.3. Next, we prove Theorem 1.3 in section 3. The primary method is standard and resembles that in \cite{wang2021remark}. The rigidity result of the Willmore-type inequality in hyperbolic space is proved in section 4. We make use of the property of conformally compactification of hyperbolic space to obtain the result. In the end, We provide two examples to illustrate that the condition '$H>-n$' in Theorem 1.3 is necessary.

\section{A preliminary analysis in Riemannian geometry}
Let $\Omega\subseteq M$ be the open bounded set with smooth boundary $\partial\Omega$ and $H$ be the mean curvature. For any $p\in \partial\Omega,$ we set $\sigma_p:[0,T)\rightarrow M\setminus\Omega$ to be the normal geodesic satisfying $\sigma_p(0)=p$ and $\sigma_p'(0)\bot T_p\partial\Omega.$
We define the function $\tau:\partial\Omega\rightarrow \mathbb{R}\bigcup\{+\infty\}$ by
\begin{equation}
\tau(p)=\sup\{t>0:d_{g}(\sigma_p(t),\partial\Omega)=t\}.
\end{equation}
 We call $Cut(\partial\Omega)=\{\sigma_p(\tau(p)):p\in \partial\Omega\}$ the cut
locus for the boundary $\Sigma=\partial\Omega$ in $M\setminus\Omega.$ The classic Riemannian geometry theory tells us that $Cut(\partial\Omega)$ is a closed subset of measure zero and the distance function $d_g(\partial\Omega, \cdot)$ is smooth in $(M\setminus\Omega)\setminus Cut(\partial\Omega).$ Furthermore, if $q = \sigma_p(\tau(p))\in Cut(\partial\Omega),$  then either $q$ is the first focal point of $\partial\Omega$ along $\sigma_p,$ or
there exists another focal point $p'$ on $\partial\Omega$ such that $q = \sigma_{p'}(\tau(p')).$
\par Let $E=\{(p,r)\in\partial\Omega\times[0,+\infty):r<(\tau(p))\}$ and we have the diffeomorphism
\begin{equation}
\Phi:E\rightarrow (M\setminus\Omega)\setminus Cut(\partial\Omega),\ \ \ (p,r)\mapsto \sigma_p(r).
\end{equation}
We use $d\mu=J(p,r)drd\sigma(p)$ denote the volume form and $H(p,r)$ to denote the mean curvature of the level set of the distance function to $\partial\Omega$ in $M\setminus\Omega.$ For any fixed $p\in\partial\Omega,$ we have the following inequality by the Bochner formula and Riccati equation:
\begin{equation*}
\begin{aligned}
 0=\frac12\Delta|\nabla r|^2=&|D^2r|^2+\langle \nabla r,\,\nabla \Delta r\rangle+Ric(\nabla r,\nabla r) \\
 \geq&\frac{(\Delta r)^2}{n}+\langle \nabla r,\,\nabla \Delta r\rangle-n,
  \end{aligned}
\end{equation*}
that is
\begin{equation}
H'(r)+\frac{H^2(r)}{n}\leq n, \ \ \ \ \ r\in [0,\tau(p)),
\end{equation}
where $H(r)$ is mean curvature of level set  and $\Delta r|_{r=0}=H_p(0)=H(p)$.

\par Let's study the solution $f_p(r)$ to the following ODE:
\begin{equation}
  f_p'(r)+f_p^2(r)=1,\ \ \ \ \ \ f_p(0)=\frac{H(p)}{n} , \ \ r\geq 0.
\end{equation}
 It is easy to find that:

\begin{itemize}
\item [(1)] If $H(p)=\pm n,$ then $f_p(r)=\pm 1.$

\item [(2)] If $|H(p)|<n,$ then $f_p(r)=\tanh(r+r_1)$ where $r_1={\rm arctanh} \frac{H(p)}{n}.$

\item [(3)] If $|H(p)|>n,$ then $f_p(r)=\coth(r+r_2)$ where $r_2={\rm arccoth} \frac{H(p)}{n}.$
Moreover, if $H(p)<-n,$ then $r\in [0,-r_2).$
\end{itemize}
Furthermore, we have
\begin{equation}
\frac{J'(p,r)}{J(p,r)}=H(r)\leq nf_p(r).
\end{equation}
By the above facts, we could obtain the following Lemma.
\begin{lemma}\label{lem:2.1}
For any $r<\tau(p),$ we have that
$$
 \frac{J(p,r)} {(\cosh r+\frac{H(p)}{n}\sinh r)^n}
$$
is non-increasing. Moreover,
\begin{equation}\label{2.6}
 J(p,r)\leq {(\cosh r+\frac{H(p)}{n}\sinh r)^n}.
\end{equation}
\end{lemma}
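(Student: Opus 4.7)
The plan is to exhibit $\varphi(r) := \cosh r + \frac{H(p)}{n}\sinh r$ as the model Jacobian factor paired with the Riccati solution $f_p$. First I would verify that $\varphi$ satisfies $\varphi'' = \varphi$ with $\varphi(0) = 1$ and $\varphi'(0) = H(p)/n$, so that $u := \varphi'/\varphi$ solves $u' + u^2 = 1$ with $u(0) = H(p)/n$. By uniqueness for the ODE (2.4), this gives $\varphi'/\varphi \equiv f_p$ on the interval where $\varphi > 0$. In other words, $\varphi$ is the natural model volume factor against which $J(p,r)$ should be compared.

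The next step is to differentiate the ratio $J(p,r)/\varphi(r)^n$ logarithmically:
\begin{equation*}
\frac{d}{dr}\log\frac{J(p,r)}{\varphi(r)^n} = \frac{J'(p,r)}{J(p,r)} - n\,\frac{\varphi'(r)}{\varphi(r)} = H(r) - nf_p(r) \le 0,
\end{equation*}
where the inequality is precisely (2.5). Hence $J(p,r)/\varphi(r)^n$ is non-increasing on the common interval of definition, and since $J(p,0) = 1 = \varphi(0)^n$ the ratio starts at $1$, which immediately yields $J(p,r) \le \varphi(r)^n$, i.e.\ the bound (2.6).

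The only point that I expect requires a little care is the case $H(p) < -n$, where $\varphi$ vanishes at $r = -r_2 = {\rm arccoth}(-H(p)/n)$ and $f_p$ has a pole there. On $[0,\min(\tau(p),-r_2))$ the argument above runs unchanged; however, if one had $\tau(p) > -r_2$, then the bound $J \le \varphi^n$ just obtained would force $J(p,r) \to 0$ as $r \to (-r_2)^-$, contradicting the smoothness and positivity of $J$ on $[0,\tau(p))$. Thus $\tau(p) \le -r_2$ and the argument covers the whole of $[0,\tau(p))$. For $H(p) \ge -n$ the function $\varphi$ is positive on all of $[0,\infty)$ and no such issue arises. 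Aside from this positivity check, the proof is just a clean application of the Riccati comparison already packaged in (2.5).
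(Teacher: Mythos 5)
Your proof is correct and follows essentially the same route as the paper: both rest on the Riccati comparison $H(r)\le n f_p(r)$ from (2.5) and the resulting monotonicity of $J(p,r)/(\cosh r+\frac{H(p)}{n}\sinh r)^n$, evaluated at $r=0$ to get (2.6). The only differences are cosmetic: the paper verifies $nf_p=n\varphi'/\varphi$ by splitting into the three cases $H(p)=\pm n$, $|H(p)|<n$, $|H(p)|>n$, whereas you obtain it uniformly from ODE uniqueness, and you additionally make explicit the check $\tau(p)\le -r_2$ when $H(p)<-n$ (so that $\varphi>0$ on all of $[0,\tau(p))$), a point the paper leaves implicit in its remark that $r\in[0,-r_2)$ in that case.
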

\begin{proof}
We will divide it into three cases:

{\bf Case 1}: $H(p)=\pm n$;
{\bf Case 2}: $|H(p)|< n$;
{\bf Case 3}: $|H(p)|> n$.

\vskip 2mm
For the {\bf Case 1} $H(p)=\pm n$, we see that $f_p(r)=\pm1$. Moreover $\frac{J'}{J}\leq\pm n.$ It is easy to see that $\frac{J(p,r)}{e^{\pm nr}}$ is non-increasing for
$r<\tau(p)$ as $(\frac{J(p,r)}{e^{\pm nr}})'\leq 0.$ Therefore $\frac{J(p,r)}{e^{\pm nr}}\leq \frac{J(p,0)}{e^{0}}=1$ which would implies (\ref{2.6}).

\vskip 2mm
For the {\bf Case 2} $|H(p)|< n$, we have $f_p(r)=\tanh(r+r_1)$. Moreover
$$
J'\leq nJ\frac{\tanh r+\frac{H(p)}{n}}{1+\frac{H(p)}{n}\tanh r}.
$$
Then we can see that
$$
\frac{J}{(\cosh r+\frac{H(p)}{n}\sinh r)^n}
$$
is non-increasing. Therefore $ J(p,r)\leq {(\cosh r+\frac{H(p)}{n}\sinh r)^n}  $.

\vskip 2mm
For the {\bf Case 3} $|H(p)|> n$, we know that $f_p(r)=\coth(r+r_2)$. Moreover
$$
J'\leq nJ\frac{\coth r+\frac{H(p)}{n}}{1+\frac{H(p)}{n}\coth r}.
$$
Then we can see that
$$
\frac{J}{(\cosh r+\frac{H(p)}{n}\sinh r)^n}
$$
is non-increasing. Therefore $ J(p,r)\leq {(\cosh r+\frac{H(p)}{n}\sinh r)^n} $.

This completes the proof of Lemma \ref{lem:2.1}.
\end{proof}

%%%%%%%%%%%%%%%%%%%%%%%%%%%%%%%%%%%%%%%

%%%%%%%%%%%%%%%%%%%%%%%%%%%%%%%%%%
\begin{lemma}\label{lem:2.2}
  Set $\bar{J}(p,r)=J(p,r)$ for $r<\tau(p)$ and $\bar{J}(p,r)=0$ for $r\geq \tau(p).$ Then
  $\lim\limits_{r\rightarrow+\infty}\frac{\bar{J}(p,r)}{e^{nr}}$ exists.
\end{lemma}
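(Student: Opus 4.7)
My plan is to deduce the existence of the limit from the monotonicity in Lemma \ref{lem:2.1} via the factorization
$$
\frac{\bar J(p,r)}{e^{nr}}=\frac{\bar J(p,r)}{q(r)^n}\cdot\frac{q(r)^n}{e^{nr}},\qquad q(r):=\cosh r+\frac{H(p)}{n}\sinh r,
$$
once I know each factor has a limit. I would first dispose of the trivial case $\tau(p)<+\infty$: by definition $\bar J(p,r)\equiv 0$ for $r\ge \tau(p)$, so the limit equals $0$. Henceforth I assume $\tau(p)=+\infty$.

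The first substantive step is to show $H(p)\ge -n$. Suppose for contradiction $H(p)<-n$; then $q(0)=1$ while $q$ vanishes at $r^{\ast}={\rm arctanh}(-n/H(p))>0$. Since $\tau(p)=+\infty$, the bound \eqref{2.6} in Lemma \ref{lem:2.1} applies at $r=r^{\ast}$ and gives $0<J(p,r^{\ast})\le q(r^{\ast})^n=0$, a contradiction. With $H(p)\ge -n$, I would verify that $q(r)>0$ on $[0,+\infty)$: for $H(p)=-n$ one has $q(r)=e^{-r}$, while for $H(p)>-n$ the formula
$$q(r)=\tfrac12[(1+H(p)/n)e^r+(1-H(p)/n)e^{-r}]$$
gives positivity directly when $|H(p)|\le n$ (both coefficients are non-negative) and, for $H(p)>n$, a short computation shows $q'(r)>0$ together with $q(0)=1$.

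Now Lemma \ref{lem:2.1} yields that $r\mapsto J(p,r)/q(r)^n$ is non-increasing and non-negative on $[0,+\infty)$, hence has a limit $L\ge 0$. On the other hand, the identity $q(r)/e^r=\tfrac12[(1+H(p)/n)+(1-H(p)/n)e^{-2r}]$ shows $q(r)^n/e^{nr}\to\bigl(\tfrac{1}{2}(1+H(p)/n)\bigr)^n$ as $r\to+\infty$. Multiplying the two limits yields the existence claim (with the limit equal to $L\cdot\bigl(\tfrac{1}{2}(1+H(p)/n)\bigr)^n$, which is $0$ in the borderline case $H(p)=-n$).

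The only nontrivial obstacle I foresee is the case $H(p)<-n$, in which the comparison function $q$ changes sign and the factorization would break down; fortunately this case is ruled out by Lemma \ref{lem:2.1} itself as sketched above. Everything else is a clean factorization combined with an elementary asymptotic expansion.
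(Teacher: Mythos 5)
Your proof is correct and follows essentially the same route as the paper's: factor $\bar J(p,r)/e^{nr}$ as $\bigl(J/q^n\bigr)\cdot\bigl(q^n/e^{nr}\bigr)$, use the monotonicity and boundedness from Lemma \ref{lem:2.1} for the first factor and an elementary expansion for the second. Your explicit handling of the case $H(p)<-n$ (showing it forces $\tau(p)<+\infty$, so that the comparison function $q$ stays positive whenever $\tau(p)=+\infty$ and the monotone quotient is genuinely bounded below) addresses a point the paper's proof passes over silently, and is a welcome tightening rather than a different method.
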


\begin{proof}
For $r\geq \tau(p)$, it is obvious.  We only need to consider the  $r<\tau(p)$. On the one hand, from Lemma 2.1, we can see that
$\tfrac{J}{(\cosh r+\frac{H(p)}{n}\sinh r)^n}$ is non-increasing and bounded which implies that
$\lim\limits_{r\rightarrow+\infty}\tfrac{J}{(\cosh r+\frac{H(p)}{n}\sinh r)^n}$ exists.
On the other hand,
$\lim\limits_{r\rightarrow+\infty}\tfrac{(\cosh r+\frac{H(p)}{n}\sinh r)^n}{e^{nr}}$ exists.

Thus, we complete the proof of Lemma \ref{lem:2.2}.

\end{proof}
%%%%%%%%%%%%%%%%%%%%%%%%%%%%%%%%%%%%%%%%%%%%%%%%%%%%%%%%%
\begin{lemma}\label{lem:2.3}
Suppose that $(M,g)$ is an $n+1$-dimensional complete Riemannian manifold with
$Ric\geq -ng.$
Let $\Omega$ be a bounded open subset of $M$ with smooth boundary. Then $$\lim\limits_{R\rightarrow+\infty} \frac{{\rm Vol}\{x\in M:d_g(x,\Omega)\leq R\}}{e^{nR}}$$
 exists. Hence the relative volume
${\rm RV}(\Omega)$ is well-defined.
\end{lemma}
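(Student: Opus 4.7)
The plan is to reduce the statement to an application of the dominated convergence theorem combined with L'Hôpital (or Stolz--Cesàro), using Lemmas \ref{lem:2.1} and \ref{lem:2.2} as the main tools.

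First I would use the normal exponential map $\Phi$ and the fact that $Cut(\partial\Omega)$ has measure zero to write, for every $R>0$,
\begin{equation*}
V(R):=\mathrm{Vol}\{x\in M: d_g(x,\Omega)\leq R\}=\mathrm{Vol}(\Omega)+\int_{\partial\Omega}\int_0^{R}\bar J(p,r)\,dr\,d\sigma(p).
\end{equation*}
Dividing by $e^{nR}$ makes the $\mathrm{Vol}(\Omega)$ contribution vanish as $R\to+\infty$, so the whole question reduces to proving that
\begin{equation*}
\lim_{R\to+\infty}\frac{1}{e^{nR}}\int_{\partial\Omega}\int_0^{R}\bar J(p,r)\,dr\,d\sigma(p)\quad \text{exists.}
\end{equation*}

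Next I would study the auxiliary function $\psi(r):=\int_{\partial\Omega} e^{-nr}\bar J(p,r)\,d\sigma(p)$. By Lemma \ref{lem:2.2} the integrand $e^{-nr}\bar J(p,r)$ has a pointwise limit $\phi(p)$ as $r\to\infty$ for each $p\in\partial\Omega$. To apply dominated convergence I need a uniform $L^1(\partial\Omega)$ bound: Lemma \ref{lem:2.1} gives $\bar J(p,r)\leq (\cosh r+\tfrac{H(p)}{n}\sinh r)^n$ whenever $r<\tau(p)$, and $\bar J(p,r)=0$ otherwise. Since $\partial\Omega$ is compact and $H$ is continuous, $|H(p)|$ is bounded on $\partial\Omega$ by some constant $H_0$, which yields the uniform estimate $e^{-nr}\bar J(p,r)\leq (1+H_0/n)^n$ for all $(p,r)$. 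Dominated convergence then produces
\begin{equation*}
L:=\lim_{r\to+\infty}\psi(r)=\int_{\partial\Omega}\phi(p)\,d\sigma(p)<+\infty.
\end{equation*}

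Finally, by Fubini,
\begin{equation*}
\frac{1}{e^{nR}}\int_{\partial\Omega}\int_0^{R}\bar J(p,r)\,dr\,d\sigma(p)=\frac{1}{e^{nR}}\int_0^{R}\psi(r)\,e^{nr}\,dr,
\end{equation*}
and a direct application of L'Hôpital's rule to the right-hand side gives limit $L/n$ (the hypotheses of L'Hôpital are satisfied since $\psi(r)e^{nr}/ne^{nR}\to L/n$ and the denominator tends to infinity). Combining all the steps, $V(R)/e^{nR}\to L/n$, which is exactly the content of Lemma \ref{lem:2.3}.

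The only delicate point is the uniform domination of $e^{-nr}\bar J(p,r)$. This is why I explicitly need Lemma \ref{lem:2.1}, not just the pointwise Lemma \ref{lem:2.2}; compactness of $\partial\Omega$ and continuity of $H$ then suffice. Everything else is routine measure-theoretic bookkeeping once that uniform bound is in hand.
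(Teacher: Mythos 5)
Your proof is correct and follows essentially the same route as the paper: both arguments rest on the uniform bound coming from Lemma \ref{lem:2.1} together with compactness of $\partial\Omega$ to justify dominated convergence, and on the pointwise limit from Lemma \ref{lem:2.2}, the only difference being that you integrate over $\partial\Omega$ first and apply one L'H\^opital/Stolz step at the end, whereas the paper takes the $R\to+\infty$ limit for each fixed $p$ first (obtaining $j(p)$) and then applies dominated convergence to $p\mapsto e^{-nR}\int_0^R\bar J(p,r)\,dr$. The two orderings are interchangeable and produce the same limit $\int_{\partial\Omega}j(p)\,d\sigma(p)$.
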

\begin{proof}
It follows from  Lemma \ref{lem:2.2} that $\lim\limits_{R\rightarrow+\infty}\frac{\int_0^R\bar{J}(p,r)dr}{e^{nR}}=j(p)$ and  there exists a number $C\in\mathbb R_{+}$ such that $\bar J(p,r)\leq Ce^{nr}$. Then
\begin{equation*}
\frac{\int_0^R\bar{J}(p,r)dr}{e^{nR}}\leq \frac{\int_0^R Ce^{nr}dr}{e^{nR}}\leq C_1.
\end{equation*}
By  Lebesgue dominated convergence theorem, we have that
\begin{equation}
\begin{aligned}
\lim\limits_{R\rightarrow+\infty} \frac{{\rm Vol}\{x\in M:d_g(x,\Omega)\leq R\}}{e^{nR}}=&\lim\limits_{R\rightarrow+\infty}\frac{\int_{\partial\Omega}\int_0^R\bar{J}(p,r)drd\sigma(p)}{e^{nR}}\\
=&\int_{\partial\Omega}j(p)d\sigma(p).
\end{aligned}
\end{equation}
This completes the proof Lemma \ref{lem:2.3}.
\end{proof}
%%%%%%%%%%%%%%%%%%%%%%%%%%%%%%%%%%%%%%%%%%%%%%%%%%%%%
\section{Proof of the main theorem}
Under the conditions of Theorem 1.3, we set $\Sigma_1=\partial\Omega\bigcap\{H\geq -n\}$ and $\Sigma_2=\partial\Omega\bigcap\{H< -n\}.$
Then
\begin{equation}
\lim\limits_{R\rightarrow+\infty}\frac{1}{R}\int_{\Sigma_2}\int_0^R \bar{J}(p,r)drd\sigma(p)=0
\end{equation}
as $\bar{J}(p,r)=0$ for $r$ large enough. Hence $\int_{\Sigma_2}\int_0^R \bar{J}(p,r)drd\sigma(p)=o(R).$
 For $R>0$ big enough,
\begin{equation}\label{eqn:3.1}
  \begin{aligned}
    {\rm Vol}&(x\in M: d_g(x,\Omega)\leq R)-{\rm Vol}(\Omega)=\int_{\partial\Omega}\int_0^R\bar{J}(p,r)drd\sigma(p)
    \\ &\leq \int_{\Sigma_1}\int_0^R(\cosh r+\frac{H(p)}{n}\sinh r)^ndrd\sigma(p)+ \int_{\Sigma_2}\int_0^R  \bar{J}(p,r)drd\sigma(p)
    \\ &=\int_{\Sigma_1}\int_0^R[1+\frac{H(p)}{n}+(1-\frac{H(p)}{n})e^{-2r}]^n\cdot (\frac{e^r}{2})^n drd\sigma(p)+o(R)
    \\ &=\int_{\Sigma_1}\int_0^R[(1+\frac{H(p)}{n})^n(\frac{e^r}{2})^n+O(e^{(n-2)r})] drd\sigma(p)+o(R)
    \\ &= \frac{e^{nR}}{n\cdot2^n}\int_{\Sigma_1}(1+\frac{H(p)}{n})^n d\sigma(p)+O(e^{(n-2)R})+o(R)
  \end{aligned}
\end{equation}
%%%%%%%%%%%%%%%%%%
Multiply both sides by $\frac{n\cdot2^n}{e^{nR}}$ and let $R\rightarrow+\infty$, we have
\begin{equation*}
 \int_{\partial\Omega\bigcap\{H\geq-n\}}(1+\frac{H(p)}{n})^nd\sigma\geq {\rm RV}(\Omega)\cdot\omega_n.
\end{equation*}

\subsection{Rigidity}
We are going to prove the rigidity result in Theorem 1.3. The ``if" part is easy from a direct computation. Here we only provide the procedure of case three, i.e.  if $M\setminus\Omega$ is isometric to
    $$\bigcup\limits_{j=1}^{N}([s_j,+\infty)\times \Sigma_j, dr^2+\frac{\cosh^2r}{\cosh^2s_j}g_{\Sigma_j})$$
for some constants $\{s_j\}_{j=1}^N,$ then $\int_{\partial\Omega}(1+\frac{H(p)}{n})^nd\sigma= {\rm RV}(\Omega)\cdot\omega_n.$ Notice that
the mean curvature $H$ on $\Sigma_j$ is just $n\tanh s_j\in(-n,n).$ Then
\begin{equation}
  \begin{aligned}
  \int_{\partial\Omega}(1+\frac{H(p)}{n})^nd\sigma&=\sum\limits_{j=1}^N \int_{\Sigma_j}(1+\frac{H(p)}{n})^nd\sigma=\sum\limits_{j=1}^N \int_{\Sigma_j}(1+\tanh s_j)^nd\sigma
  \\&=\sum\limits_{j=1}^N (\frac{e^{s_j}}{\cosh s_j})^n\cdot {\rm Vol}(\Sigma_j)
  \end{aligned}
\end{equation}
On the other hand,
\begin{equation}
  \begin{aligned}
  {\rm RV}(\Omega)\cdot \omega_n&=n\cdot 2^n\lim\limits_{R\rightarrow+\infty} \frac{{\rm Vol}\{x\in M:d_g(x,\Omega)\leq R\}}{e^{nR}}
  \\&=n\cdot2^n\sum\limits_{j=1}^N \lim\limits_{R\rightarrow+\infty}\frac{1}{e^{nR}}\int_{\Sigma_j}\int_{s_j}^{s_j+R}\frac{\cosh^n r}{\cosh^n s_j}drd\sigma
  \\&=2^n\sum\limits_{j=1}^N\lim\limits_{R\rightarrow+\infty}\int_{\Sigma_j}\frac{1}{e^{nR}}\frac{\cosh^n(s_j+R)}{\cosh^n s_j}d\sigma
  \\&=\sum\limits_{j=1}^N (\frac{e^{s_j}}{\cosh s_j})^n\cdot {\rm Vol}(\Sigma_j).
  \end{aligned}
\end{equation}
Hence the equality holds.
\par In the following, we prove the ``only if" part. Firstly, we have that
\begin{lemma}
Let $M,\Omega$ be defined as in Theorem 1.3, and suppose that $H_{\partial\Omega}> -n$ and
\begin{equation}\label{3.5}
 \int_{\partial\Omega}(1+\frac{H}{n})^nd\sigma= {\rm RV}(\Omega)\cdot\omega_n.
\end{equation}
Then on $\partial\Omega,$ we have that  $\tau=\infty$ and $J(p,r)= {(\cosh r+\frac{H(p)}{n}\sinh r)^n}$ for all $r>0.$
\end{lemma}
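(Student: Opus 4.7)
The proof proceeds by examining where equality is forced in the chain of inequalities leading to (\ref{1.5}). Since $H > -n$ on $\partial\Omega$, we have $\Sigma_1 = \partial\Omega$ and $\Sigma_2 = \emptyset$, so the $o(R)$ remainder in (\ref{eqn:3.1}) disappears and only the pointwise bound $\bar J(p,r) \leq (\cosh r + \tfrac{H(p)}{n}\sinh r)^n$ from Lemma \ref{lem:2.1} contributes any slack. Dividing the inequality by $e^{nR}$ and passing to the limit $R\to\infty$ — the dominated convergence theorem being applicable thanks to the uniform bound $\bar J(p,r)\leq Ce^{nr}$ from Lemma \ref{lem:2.3} — the assumed equality (\ref{3.5}) becomes
\begin{equation*}
\int_{\partial\Omega}\lim_{R\to\infty}\frac{\int_0^R \bar J(p,r)\,dr}{e^{nR}}\,d\sigma(p) = \int_{\partial\Omega}\frac{1}{n\cdot 2^n}\left(1+\frac{H(p)}{n}\right)^{n} d\sigma(p).
\end{equation*}

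Because the integrands satisfy the pointwise inequality $\int_0^R \bar J(p,r)\,dr \leq \int_0^R (\cosh r + \tfrac{H(p)}{n}\sinh r)^n\,dr$ and the normalized right-hand side tends to $\tfrac{(1+H(p)/n)^n}{n\cdot 2^n}$, equality of the two integrals forces pointwise a.e.\ equality of the limits:
\begin{equation*}
\lim_{R\to\infty}\frac{\int_0^R \bar J(p,r)\,dr}{e^{nR}} = \frac{1}{n\cdot 2^n}\left(1+\frac{H(p)}{n}\right)^{n}\qquad\text{for a.e.}~p\in\partial\Omega.
\end{equation*}
Since $H(p)>-n$ the right-hand side is strictly positive. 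If $\tau(p)<\infty$ then $\bar J(p,\cdot)$ has compact support and the left-hand side vanishes, a contradiction. Hence $\tau(p)=\infty$ for a.e.\ such $p$. For these $p$, Lemma \ref{lem:2.1} gives that $g_p(r) := J(p,r)/(\cosh r + \tfrac{H(p)}{n}\sinh r)^n$ is non-increasing with $g_p(0)=1$, so $g_p(\infty)$ exists. An elementary Ces\`aro (or L'H\^opital) argument identifies the limit above with $g_p(\infty)\cdot\tfrac{(1+H(p)/n)^n}{n\cdot 2^n}$, forcing $g_p(\infty)=1$ and hence $g_p\equiv 1$. This yields $J(p,r) = (\cosh r + \tfrac{H(p)}{n}\sinh r)^n$ for all $r>0$ and a.e.\ $p$.

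The remaining — and main — obstacle is to promote these a.e.\ identities to every $p\in\partial\Omega$. The set $\{\tau=\infty\}$ has full measure, hence is dense in $\partial\Omega$. Given any $p_0\in\partial\Omega$, choose $p_k\to p_0$ with $\tau(p_k)=\infty$. The Jacobian $J$ is smooth on $E = \{(p,r):r<\tau(p)\}$, so continuity in $p$ propagates the identity $J(p_0,r) = (\cosh r + \tfrac{H(p_0)}{n}\sinh r)^n$ to all $r<\tau(p_0)$. If $\tau(p_0)$ were finite with endpoint $q = \sigma_{p_0}(\tau(p_0))$, then by the dichotomy recalled in Section 2 either $q$ is a focal point, where $J(p_0,r)\to 0$ as $r\to\tau(p_0)^-$ — contradicting the positive limit $(\cosh\tau(p_0)+\tfrac{H(p_0)}{n}\sinh\tau(p_0))^n>0$ inherited from the smooth extension — or $q$ is also the cut endpoint of another geodesic $\sigma_{p_0'}$; in the latter case the same continuity argument applied near $p_0'$, which is itself approximable by points with $\tau=\infty$, rules this out by producing two distinct smooth branches of the normal exponential map meeting with the same positive Jacobian at $q$, violating injectivity of $\Phi$ on $E$. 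Hence $\tau\equiv\infty$ everywhere, and the identity for $J$ then extends to all $p\in\partial\Omega$ by the same continuity argument.
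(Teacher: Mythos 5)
Your proposal is correct and follows essentially the same route as the paper: both arguments rest on the monotonicity of $Q(p,r)=J(p,r)/(\cosh r+\tfrac{H(p)}{n}\sinh r)^n$ from Lemma \ref{lem:2.1} to force $Q\equiv 1$ and $\tau=\infty$ almost everywhere and then extend by continuity; you reach the a.e.\ statement via dominated convergence plus a pointwise L'H\^opital identification, where the paper instead splits the radial integral at an intermediate radius $R'$, uses $Q(p,r)\leq Q(p,R')$ for $r\geq R'$, and sends $R\to\infty$ before $R'\to\infty$. One small caveat on your final step: in ruling out a non-focal cut point $q=\sigma_{p_0}(\tau(p_0))=\sigma_{p_0'}(\tau(p_0'))$ you appeal to ``injectivity of $\Phi$ on $E$,'' but $q\notin\Phi(E)$, so no injectivity is literally violated; a cleaner finish is to observe that $\{(p,t):d_g(\sigma_p(t),\partial\Omega)=t\}$ is closed, so $\tau(p_0)=\infty$ follows at once from $p_0$ being a limit of points with $\tau=\infty$ (the paper itself disposes of this step with a bare ``by continuity'').
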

\begin{proof}
Let $\Sigma_3=\partial\Omega\cap\{\tau=\infty\}$ and $\Sigma_4=\partial\Omega\cap\{\tau<\infty\}.$ Set
$$Q(p,r)=\frac{J(p,r)}{(\cosh r+\frac{H(p)}{n}\sinh r)^n}$$
which is non-increasing. Then for any fixed $R' < R$ we have
\begin{equation*}
  \begin{aligned}
    &\ \ \ \ \ \ {\rm Vol}(x\in M: d_g(x,\Omega)\leq R)-{\rm Vol}(\Omega)
    \\ &=\int_{\Sigma_3}\int_0^R\bar{J}(p,r)drd\sigma(p)+\int_{\Sigma_4}\int_0^R\bar{J}(p,r)drd\sigma(p)
    \\ &=\int_{\Sigma_3}\int_0^R J(p,r)drd\sigma(p)+o(R)
    \\ &= \int_{\Sigma_3}\int_0^{R'}J(p,r)drd\sigma(p)+ \int_{\Sigma_3}\int_{R'}^{R}Q(p,r)(\cosh r+\frac{H(p)}{n}\sinh r)^ndrd\sigma(p)
    +o(R)\\
     &\leq O(1)+ \int_{\Sigma_3}Q(p,R')\int_{R'}^{R}(\cosh r+\frac{H(p)}{n}\sinh r)^ndrd\sigma(p)
   +o(R)\\
   &=O(1)+\frac{e^{nR}}{n\cdot2^n}\int_{\Sigma_3}Q(p,R')(1+\frac{H(p)}{n})^n d\sigma(p)+O(e^{(n-2)R})+o(R)
  \end{aligned}
\end{equation*}
Multiply both sides by $\frac{n\cdot2^n}{e^{nR}}$ and let $R\rightarrow+\infty$, we have
\begin{equation*}
 {\rm RV}(\Omega)\cdot\omega_n\leq \int_{\Sigma_3}Q(p,R')(1+\frac{H(p)}{n})^nd\sigma.
\end{equation*}
Let $R'\rightarrow+\infty,$ we obtain that
\begin{equation*}
{\rm RV}(\Omega)\cdot\omega_n\leq \int_{\Sigma_3}Q_{\infty}(p)(1+\frac{H(p)}{n})^nd\sigma,
\end{equation*}
where $Q_{\infty}(p)=\lim\limits_{r\rightarrow+\infty}Q(p,r)\leq1$.  As we have equality  (\ref{3.5}) and $H>-n,$ we must have $Q_{\infty}(p)=1$ for a.e. $p \in\Sigma_3$ and $\partial\Omega\setminus\Sigma_3$ is zero measure set. It follows that
$$
 J(p,r)=(\cosh r+\frac{H(p)}{n}\sinh r)^n \ \ \ {\rm on}\ [0,\infty)
$$
for a.e. $p \in\partial\Omega$. By continuity the above identity holds for all $p\in\partial\Omega$.
\end{proof}
Lemma 3.1 implies that if the equality (\ref{3.5}) holds, we must have that on $\Phi([0,\infty)\times\partial\Omega)$
\begin{equation}
D^2r=\frac{H(p,r)}{n}g=f_p(r)g,\ \ Ric(\nabla r,\nabla r)=-n.
\end{equation}
As $Ric\geq-ng$, it follows that $Ric(\nabla r,X)=0$ for  $X\bot\nabla r.$ From the 1st equation above, we know
$\partial\Omega$ is an umbilical hypersurface, i.e. the second fundamental form $h=f_p(r)g_{\partial\Omega}$. Let $\{e_0=\nu,e_1,\cdots,e_n\}$
be orthonormal frmae along $\partial\Omega$. By the Codazzi equation, with $1\leq\, i,j,k\leq n$, we have
\begin{equation}
R(e_k,e_j,e_i,\nu)=h_{ij,k}-h_{ik,j}=\frac1n(H_k\delta_{ij}-H_j\delta_{ik}).
\end{equation}
It follows that
\begin{equation*}
-\frac{n-1}nH_j=Ric(e_j,\nu)=0.
\end{equation*}
Thus $H$ is locally constant  on $\partial\Omega$. Therefore $\partial\Omega$ must be the union of several
components of $\partial\Omega.$ Let $\Sigma\subseteq\partial\Omega$ be a connected component such that the mean curvature is a constant $nK$ on $\Sigma.$  We know that $\Phi $ is a diffeomorphism from $[0,\infty)\times\Sigma$
 onto its image and the pullback metric $\Phi^{\ast}g$ takes the following form
 $$
 dr^2+{g}_r,
 $$
where ${g}_r$ is a $r$-dependent family of metrics on $\Sigma$ and $ g_0=g_{\Sigma}$.
Since
\begin{equation}
D^2r=
\left\{
\begin{aligned}
& g\ \ \ \ \ \ \ \ \ \ \ \ \ \ \ \ \ \ \ \ \ \ \ \ \ \ \ \ \ \ \ \ \ \ \ \ \ \ \ \ \  \ \  \text{if} \ K=1,\\
&\tanh(r+{\rm arctanh}K)\,g\ \ \ \ \ \ \ \text{if} \ |K|<1,\\
&\coth(r+{\rm arccoth}K)\,g\ \ \ \ \ \ \ \ \text{if} \ K>1.
\end{aligned}\right.
\end{equation}
In the local coordinates $\{x_1,x_2,\cdots,x_n\}$ on $\Sigma$, we have
\begin{equation}
\frac12\frac{\partial}{\partial r} g_{ij}=\left\{
\begin{aligned}
& \, g_{ij}\ \ \ \ \ \ \ \ \ \ \ \ \ \ \ \ \ \ \ \ \ \ \ \ \ \ \ \ \ \ \ \ \ \ \ \  \ \ \  \ \ \ \text{if} \ K=1,\\
&\tanh(r+{\rm arctanh}K)\,g_{ij}\ \ \ \ \ \ \ \text{if} \ |K|<1,\\
&\coth(r+{\rm arccoth}K)\,g_{ij}\ \ \ \ \ \ \ \ \text{if} \ K>1.
\end{aligned}\right.
\end{equation}
Therefore
\begin{equation}\label{3.10}
 g_r=\left\{
\begin{aligned}
&e^{2r}\, g_{\Sigma}\ \ \ \ \ \ \ \ \ \ \ \ \ \ \ \ \ \ \ \ \ \ \ \  \ \ \ \ \ \ \ \ \ \ \ \ \ \ \ \ \ \ \ \ \ \ \ \ \  \ \ \ \text{if} \ K=1,\\
&(1-K^2)\cosh^2(r+{\rm arctanh}K)g_{\Sigma}\ \ \ \  \ \ \text{if} \ |K|<1,\\
&(K^2-1) \sinh^2(r+{\rm arccoth}K) g_{\Sigma}\ \ \ \   \ \ \text{if} \ K>1.
\end{aligned}\right.
\end{equation}
\par Suppose that $\partial\Omega$ has $N$ connected components and each one has constant mean curvature, then the number of ends of $M$ (denoted by $N(M)$) is also $N$ as $(M\setminus\Omega,g)$ are some warped products depending on $\partial\Omega.$ Here
$$N(M)=\lim\limits_{k\rightarrow\infty}N_{\Omega_k}(M)$$ and
$N_{\Omega_k}(M)$ is the number of unbounded connected components of $M\setminus\Omega_k$ where $\{\Omega_k\}$ is
a compact exhaustion of $M.$
\\~\\
\textbf{Case (1):} $H(p)>n$ for some $p\in\partial\Omega.$
\par We still use $\Sigma$ to denote the connected component containing $p.$ Then $K>1$ on $\Sigma.$ The only thing we need to check is that $M$ has only one end. Here we utilize Theorem 3 in \cite{cai1999boundaries} to prove it. Set $\Sigma_k=\{k\}\times\Sigma$ for $k=1,2,\cdots.$ Then every $\Sigma_k$ is compact and separates $M.$ We also have that $d(p,\Sigma_k)\rightarrow+\infty$ as $k$ tends to infinity. Notice that $H|_{\Sigma_k}>n,$ then by Theorem 3 in \cite{cai1999boundaries}, we could conclude that
$M$ has one end or $(M,g)$ isometric to $(\mathbb{R}\times\Sigma,  dr^2 + e^{2r}g_0).$ By the formula of line 3 in (\ref{3.10}), $M$ has one end.
\\~\\
\textbf{Case (2):} $H(p)=n$ for some $p\in\partial\Omega.$
\par We construct $\Sigma$ and $\Sigma_k$ as above and the proof is the same as case (1). In this case $H|_{\Sigma_k}=n.$ By Theorem 3 in \cite{cai1999boundaries}, we could conclude that $M$ has either one end or $(M,g)$ isometric to $(\mathbb{R}\times\Sigma,  dr^2 + e^{2r}g_0).$ We claim that if we require that $H_{\partial\Omega}>-n,$ then the second scenario would not happen. Or else $\partial\Omega$ has another connected component $\Sigma'$ besides $\Sigma$ and the mean curvature of is also constant $c.$ In this case,
$M\setminus\Omega$ is isomorphic to $([0,+\infty)\times\Sigma,dr^2+e^{2r}g_\Sigma)\bigcup([0,+\infty)\times\Sigma',dr^2+e^{-2r}g_{\Sigma'}).$ Then
a direct calculation shows that
$$2^n{\rm Vol}(\Sigma)={\rm RV}(\Omega)\omega_n=\int_\Omega(1+\frac{H}{n})^nd\sigma=2^n{\rm Vol}(\Sigma)+(1+\frac{c}{n})^n{\rm Vol}(\Sigma').$$
See example 5.1 for more details. Then $c=-n$ and it contradicts the condition.
Then $M$ has only one end and hence $\partial\Omega$ is connected.
\\~\\
\textbf{Case (3):} $H(p)\in(-n,n)$ for some $p\in\partial\Omega.$
\par It is a direct conclusion of case (1) and (2) that $H_{\partial\Omega}\in(-n,n).$ Suppose that $N(M)=N$ and hence $\partial\Omega=\bigcup\limits_{j=1}^{N}\Sigma_j$ where $\Sigma_j$ is the connected component of $\partial\Omega$ and $H|_{\Sigma_j}=h_j\in(-n,n)$ is a constant. Set $s_j={\rm arctanh}\frac{h_j}{n}\in\mathbb{R},$ then case 3 in Theorem 1.3 is obtained by (\ref{3.10}).

%%%%%%%%%%%%%%%%%%%%%%%%%%%%%%%%%%%%%%%%%%%%%%%%%%
\section{A Willmore typer inequality in hyperbolic space}
 In this section, we prove Theorem 1.7. The inequality (\ref{1.8}) is an easy corollary of Theorem 1.3 and the monotonicity of relative volume. In order to prove the rigidity result, we firstly recall that the relative volume is defined by the limit of the quotient in (1.2) and it is well-defined since the Ricci curvature is bounded from below. In the following, we introduce another method to calculate the relative volume on AH manifold of order 2 which was introduced in \cite{jin2022relative} and \cite{li2017gap}.
 \par Let $(M,g)$ be an $n+1$-dimensional conformally compact Riemannian manifold and its sectional curvature satisfies that $|K_g+1|=O(x^2)$ where $x$ is the geodesic defining function and $\bar{g}=x^2g$ is the geodesic compactification with boundary metric $\hat{g}=\bar{g}|_{\partial M}.$ Let $E\subset M$ be a compact subset or bounded open set, we use $s_E$ to denote the distance function of $E,$ i.e.
\begin{equation}
  \forall q\in M,\ \ s_E(q)=d_{g}(E,q)=\inf\limits_{y\in E}d_{g}(y,q).
\end{equation}
Let $r=-\ln x$ near the boundary and it is a distance function of $(M,g).$ We set the conformal change $g^E=e^{-2s_E}g$ on $M$  and could prove that $|d(s_E-r)|_{\bar{g}}$ is bounded  for sufficiently large $r$ almost everywhere (Lemma 4.1 in \cite{li2017gap}, Lemma
3.1 in \cite{dutta2010rigidity}).
Hence $g^E=e^{-2s_E}g=e^{2(r-s_E)}\bar{g}$ is Lipschitz continuous to the boundary.
 Then
\begin{equation}
\begin{aligned}
  {\rm RV}(E)&=\frac{2^n}{\omega_n}\lim\limits_{R\rightarrow+\infty} \frac{{\rm Vol}_g\{x\in M:d_g(x,E)= R\}}{e^{nR}}
\\ &=\frac{2^n}{\omega_n}\lim\limits_{R\rightarrow+\infty} {\rm Vol}_{g^E}\{x\in M:d_g(x,E)= R\}
\\ &=\frac{2^n}{\omega_n}\cdot {\rm Vol}(\partial M,\hat{g}^E)
\end{aligned}
\end{equation}
 where $\hat{g}^E=g^E|_{\partial X}.$

  \par Now we consider the $n+1$-dimentional hyperbolic space. Let $\Omega\subseteq\mathbb{H}^{n+1}$ be an open bounded set and $B(p,I(\Omega))\subseteq \Omega$ is the inscribed ball.
Set $s_p(\cdot) = d_g(p,\cdot)$ and $s_{\Omega}(\cdot)=d_g(\Omega,\cdot)$ be the distance functions and consider the conformal compact metric $g^p=e^{-2s_p}g$ and $g^\Omega=e^{-2s_\Omega}g$ with boundary metrics $\hat{g}^p$ and $\hat{g}^\Omega$ on the boundary $\mathbb{S}^n.$ We have that
$$1={\rm RV}(p)=\frac{2^n}{\omega_n}\cdot {\rm Vol}(\mathbb{S}^n,\hat{g}^p).$$
As it is showed above, $s_p-s_{\Omega}\geq I(\Omega)$ and it is Lipschitz continuous
to the boundary. Then $(s_p-s_{\Omega})|_{\mathbb S^n}\geq I(\Omega)$.
Hence
\begin{equation}\label{4.3}
 \begin{aligned}
   {\rm RV}(\Omega)&=\frac{2^n}{\omega_n}{\rm Vol}(\mathbb S^n,\hat g_{\Omega})=\frac{2^n}{\omega_n}\int_{\mathbb S^n}e^{n(s_{p}(q)-s_{\Omega}(q))}dV_{\hat g^p}
   \\ &\geq \frac{2^n}{\omega_n}\int_{\mathbb S^n} e^{nI(\Omega)}dV_{\hat g^p}=e^{nI(\Omega)}.
 \end{aligned}
\end{equation}
If the equality in (\ref{1.8}) holds, then equality ${\rm RV}(\Omega)=e^{nI(\Omega)}$ also holds and from (\ref{4.3}), we get that
\begin{equation}
(s_p-s_{\Omega})|_{\mathbb S^n}\equiv I(\Omega).
\end{equation}
by the continuous of $s_p-s_{\Omega}$ on $\mathbb{S}^n.$

Let $\sigma:[0,+\infty)\rightarrow\mathbb{H}^{n+1}$ be any geodesic ray from $p=\sigma(0)$ to $\hat{q}=\sigma(+\infty)\in\mathbb{S}^n.$
Let $q\in\partial\Omega\bigcap\sigma$ be any point when $\sigma$ meets $\partial\Omega.$ Then
$d(p,q)\geq I(\Omega)$ by the definition of inscribed radius. On the other hand,
\begin{equation}
   \begin{aligned}
     I(\Omega)&=(s_p-s_{\Omega})(\hat{q})=\lim\limits_{t\rightarrow+\infty}(d(p,\sigma(t)-d(\Omega,\sigma(t)))
     \\&=\lim\limits_{t\rightarrow+\infty}(d(p,q)+d(q,\sigma(t))-d(\Omega,\sigma(t)))
     \\&\geq \lim\limits_{t\rightarrow+\infty}d(p,q)=d(p,q).
   \end{aligned}
\end{equation}
Then $d(p,q)=I(\Omega)$ and hence $q$ is unique. We get that $q\in \partial B(p,I(\Omega)).$ Since the direction of $\sigma$ is arbitrarily selected, we finally obtain that
$\Omega=B(p,I(\Omega)).$

\section{Some examples}

\begin{example}
$(M ,g)=(\mathbb{R}\times N,dr^2+e^{2r}g_N)$ where $N$ is an $n$-dimensional compact manifold satisfying that $Ric_N\geq 0$. Then
$Ric_M\geq -ng.$ Set $\Omega=(T_1,T_2)\times N$ for some constant $T_1$ and $T_2.$
\end{example}
$\partial\Omega $ has two connected components: $\Sigma_1=\{T_1\}\times N$ and $\Sigma_2=\{T_2\}\times N.$ It is easy to calculate that the mean curvature  $H|_{\Sigma_1}=-n $ and $H|_{\Sigma_2}=n.$
Then
\begin{equation}
\begin{aligned}
{\rm RV}(\Omega)\omega_n&=n\cdot2^n\lim\limits_{R\rightarrow+\infty} e^{-nR}[V(\Omega)+\int_{\Sigma_2}\int_{0}^R e^{nr}drd\sigma+\int_{\Sigma_1}\int_{-R}^{0} e^{nr}drd\sigma] \\
&  =2^n\cdot {\rm Vol}(\Sigma_2)
  = \int_{\Sigma_1\cup\Sigma_2}(1+\frac{H}n)^nd\sigma
\\&=\int_{\partial\Omega}(1+\frac{H}n)^nd\sigma.
\end{aligned}
\end{equation}
In this case, $M$ has two ends and one of them is a cusp. Then $M$ dose not admit a conformal compactification. The condition $H>-n$ in Corollary 1.5 is in some sense sharp.
%%%%%%%%%%%%%%%%%%%%%%%%%%%%%%%%%%%%%%%%%%%%%%%%%%%%%5

\begin{example}
 $(\mathbb R^{n+1},g)=(\mathbb R^{n+1},dr^2+\phi^2(r)g_{\mathbb S^n})$, where
\begin{equation}
\phi(r)=\left\{
\begin{aligned}
&3e^{r-2}\ \ \ \ \ \ \ \ \ \ r\geq2,\\
&r+\frac1{16}r^4\ \ \ r\in[0,2).
\end{aligned}\right.
\end{equation}
\end{example}
Then we have the following facts.
\begin{lemma}
The metric $g$ is $C^2$ and $Ric\geq-ng$.
\end{lemma}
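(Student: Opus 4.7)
The plan is to verify the two claims separately: first the $C^2$ regularity of $g$ across the two interfaces $r=0$ and $r=2$, and then the pointwise Ricci lower bound on the two smooth pieces, extending across the interfaces by continuity of $\mathrm{Ric}$.

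For $C^2$ matching at $r=2$ I would just compute values from both sides. From $\phi(r)=3e^{r-2}$ on $[2,\infty)$ I read off $\phi(2)=\phi'(2)=\phi''(2)=3$, and from $\phi(r)=r+r^4/16$ on $[0,2)$ I get $\phi(2)=3$, $\phi'(2)=1+r^3/4|_{r=2}=3$, $\phi''(2)=3r^2/4|_{r=2}=3$. The three values agree, so $\phi\in C^2$ at $r=2$ and hence the warped product metric is $C^2$ across that slice.

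For $C^2$ regularity at the origin I would switch to Cartesian coordinates on $\mathbb{R}^{n+1}$. Using $dr=(x_i/r)dx^i$ the metric takes the form
\[
g_{ij}(x)=\frac{\phi(r)^2}{r^2}\delta_{ij}+\left(1-\frac{\phi(r)^2}{r^2}\right)\frac{x_ix_j}{r^2}.
\]
For $r\in[0,2]$ the expansion $\phi^2/r^2-1=r^3/8+r^6/256$ reduces the problem to showing that $|x|^3\delta_{ij}$ and $|x|\,x_ix_j$ are $C^2$ at the origin (the other contributions being smooth); a direct differentiation shows both functions have continuous second derivatives at $0$ (and only fail $C^3$), so $g$ is $C^2$ on $\mathbb{R}^{n+1}$.

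For the Ricci bound I would apply the standard warped-product formulas on each smooth piece:
\[
\mathrm{Ric}(\partial_r,\partial_r)=-n\frac{\phi''}{\phi},\qquad \mathrm{Ric}(X,X)=-\frac{\phi''}{\phi}+(n-1)\frac{1-(\phi')^2}{\phi^2}
\]
for $X$ a unit sphere-tangent vector. On $\{r\geq 2\}$ one has $\phi=\phi'=\phi''$, hence radial Ricci $=-n$ and tangential Ricci $=-n+(n-1)/\phi^2\geq-n$. On $(0,2)$ one computes $\phi''/\phi=3r/[4(1+r^3/16)]$, which is monotone on $[0,2]$ with maximum $1$ at $r=2$, giving the radial bound. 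The tangential component, after simplification, attains its minimum on $[0,2]$ at $r=2$ with value $-(8n+1)/9$, and $-(8n+1)/9\geq -n$ iff $n\geq 1$. At $r=0$ the metric is Euclidean to leading order so $\mathrm{Ric}(0)=0\geq -ng$, and the bounds pass to $r=2$ by continuity of $\mathrm{Ric}$ on the $C^2$ metric.

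The main obstacle is verifying that the tangential Ricci on $(0,2)$ is genuinely minimized at the endpoint $r=2$. I would handle this by introducing $a=r^3/16\in[0,1/2]$, so that $-\mathrm{Ric}(X,X)$ becomes an explicit rational function of $r$ and $a$; monotonicity in $r$ then reduces to an elementary sign check on its derivative, keeping the argument bookkeeping-free.
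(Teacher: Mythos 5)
Your proposal is correct, and the one step you leave as a sketch does go through: with $\phi=r+r^4/16$ one finds $\frac{\phi''}{\phi}=\frac{3r/4}{1+r^3/16}$ and $\frac{(\phi')^2-1}{\phi^2}=\frac{16r(8+r^3)}{(16+r^3)^2}$, whose derivative has numerator proportional to $128+24r^3-2r^6=-2(r^3-16)(r^3+4)>0$ on $[0,2]$, so both quantities are increasing there and the tangential Ricci is indeed minimized at $r=2$ with value $-(8n+1)/9\geq -n$. Your route differs from the paper's at exactly this point: the paper never locates the minimum of the tangential Ricci, but instead bounds the two constituents separately, showing $\frac{\phi''}{\phi}\leq 1$ via the elementary inequality $\frac34 r^2\leq r+\frac{1}{16}r^4$ and showing $\frac{(\phi')^2-1}{\phi^2}\leq 1$ via the algebraic identity $\phi^2-(\phi')^2+1=\frac{r^2}{256}\bigl((r^3-8r+8)^2+16(r^3-4r^2+12)\bigr)\geq 0$, and then adds the two bounds to get $\mathrm{Ric}(X,X)\geq -(n-1)-1=-n$. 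The paper's factorization is a one-line verification once found but is unmotivated; your monotonicity argument is more systematic, yields the slightly sharper interior bounds $\frac{(\phi')^2-1}{\phi^2}\leq \frac89$ and $\mathrm{Ric}(X,X)\geq-(8n+1)/9>-n$, and identifies where the lower bound is closest to being attained. Your treatment of $C^2$ regularity at the origin (passing to Cartesian coordinates and isolating the $|x|^3\delta_{ij}$ and $|x|\,x_ix_j$ terms) is also more explicit than the paper's, which only records the boundary values $\phi(0)=0$, $\phi'(0)=1$, $\phi''(0)=0$; the check at $r=2$ is identical in both.
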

\begin{proof}
It is trivial that $g\in C^2$ because
\begin{equation}
\left\{
\begin{aligned}
&\phi(0)=0, \ \ \phi'(0)=1,\ \ \phi''(0)=0,\\
&\phi(2)=\phi'(2)=\phi''(2)=3.
\end{aligned}\right.
\end{equation}
%%%%%%%%%%%%%%%%%%%%%%%%%%

For $X\bot\nabla r$ and  $|X|=1$,
\begin{equation}
\left\{
\begin{aligned}
&Ric(X,X)=(n-1)\frac{1-\phi'^2}{\phi^2}-\frac{\phi''}{\phi},\\
&Ric(\nabla r,\nabla r)=-n\frac{\phi''}{\phi}.
\end{aligned}\right.
\end{equation}
Hence, we only need to check $\frac{\phi''}{\phi}\leq1$ and $\frac{\phi'^2-1}{\phi^2}\leq 1$.
If $r\geq 2$, it is easy to see that $\frac{\phi''}{\phi}=1$ and $\frac{\phi'^2-1}{\phi^2}\leq 1$.
If $r\in[0,2)$, we can see that $\frac34r^2\leq r+\frac1{16}r^4$ is obvious which implies that $\frac{\phi''}{\phi}\leq1$.
We compute that
\begin{equation}
\begin{aligned}
\phi^2-\phi'^2+1&=\frac{r^2}{256}(r^6-16r^4+32r^3-128r+256)\\
&=\frac{r^2}{256}\left((r^3-8r+8)^2+16(r^3-4r^2+12)\right)\geq0.
\end{aligned}
\end{equation}

This completes the proof of Lemma 5.3.
\end{proof}
With the presentations above, we could consider $\Omega=(3,4)\times\mathbb S^n\subseteq(\mathbb R^{n+1},g).$ Then $\partial\Omega=\Sigma_1\bigcup\Sigma_2=\{3\}\times \mathbb S^n \bigcup\{4\}\times \mathbb S^n$ and
$H|_{\Sigma_1}=-n$ and $H|_{\Sigma_2}=n$.
We find that $\tau(\Sigma_1)<\infty$ and it is a ``hole". Hence the condition $``H>-n"$ in Theorem 1.3 and Lemma 3.1 is necessary.
\\~\\
{\bf Acknowledgement.} The second author expresses gratitude to Professor Haizhong Li, Hui Ma and Daguang Chen for their invaluable assistance during his visit to Tsinghua University.
\bibliographystyle{plain}%

\bibliography{Jin-Yin2024}

\noindent{Xiaoshang Jin}\\
  School of mathematics and statistics, Huazhong University of science and technology, 430074, Wuhan, P.R. China
 \\Email address: jinxs@hust.edu.cn
 \\~\\
\noindent{Jiabin Yin}\\
 School of Mathematics and Statistics, Guangxi Normal University, 541004, Guilin, P.R. China
	\\Email address: jiabinyin@126.com

\end{document}